\documentclass[a4paper]{amsart}

\usepackage{amsmath}
\usepackage{amsfonts}
\usepackage{amssymb}
\usepackage{amsthm}
\usepackage[cmtip,all]{xy}

\newcommand{\ZZ}{\mathbb{Z}}
\newcommand{\G}{{\mathbb G}}
\newcommand{\sO}{{\mathcal O}}
\newcommand{\sF}{{\mathcal F}}
\newcommand{\sG}{{\mathcal G}}
\newcommand{\colim}{\operatorname{colim}}

\newcounter{counter}
\theoremstyle{plain} 
\newtheorem{theorem}[counter]{Theorem}
\newtheorem{proposition}[counter]{Proposition}
\newtheorem{lem}[counter]{Lemma}
\newtheorem{cor}[counter]{Corollary}

\begin{document}

\author{Moritz Kerz}
\address[Moritz Kerz]{Fakult{\"a}t f{\"u}r Mathematik, Universit{\"a}t Regensburg, 93040 Regensburg, Germany, \textit{e-mail address:} \texttt{moritz.kerz@mathematik.uni-regensburg.de}}

\author{Florian Strunk}
\address[Florian Strunk]{Fakult{\"a}t f{\"u}r Mathematik, Universit{\"a}t Regensburg, 93040 Regensburg, Germany, \textit{e-mail address:} \texttt{florian.strunk@mathematik.uni-regensburg.de}}

\thanks{The authors are supported by the SFB/CRC 1085 \emph{Higher Invariants} (Universit\"at Regensburg) funded by the DFG}

\title[Vanishing of $KH$-theory]{On the vanishing of negative homotopy $K$-theory}

\begin{abstract}
We show that the homotopy invariant algebraic $K$-theory of Weibel vanishes below the negative of
the Krull dimension of a noetherian scheme. This gives evidence for a conjecture of Weibel
about vanishing of negative algebraic $K$-groups.
\end{abstract}

\maketitle

\begin{quote}
\em To Chuck Weibel on the occasion of his 65th birthday.
\end{quote}

\medskip

\section{Introduction}

The aim of this note is to prove the following theorem.  For a scheme $X$ and $i\in
\mathbb Z$ we consider homotopy
$K$-theory $KH_i(X)$ as defined in \cite[Sec.~IV.12]{KBook}.

\begin{theorem}\label{mainthm}
Let $X$ be a noetherian scheme of finite Krull dimension $d$. Then $KH_i(X)=0$ for $i<-d$.
\end{theorem}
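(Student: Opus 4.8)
The plan is to exploit the fact that, unlike algebraic $K$-theory itself, homotopy $K$-theory satisfies descent for the cdh topology on noetherian schemes. Granting this, I would regard $KH$ as a sheaf of spectra on the (small) cdh site of $X$ and run the associated descent spectral sequence
$$E_2^{s,t} = H^s_{\mathrm{cdh}}(X,\mathcal{A}_t) \Longrightarrow KH_{t-s}(X),$$
where $\mathcal{A}_t = a_{\mathrm{cdh}}\,\pi_t(KH)$ is the cdh sheafification of the presheaf $U \mapsto KH_t(U)$. Any nonzero contribution to $KH_i(X)$ forces some $E_2^{s,t}\neq 0$ with $t-s=i$, so the theorem reduces to two independent bounds: a \emph{vertical} bound $\mathcal{A}_t=0$ for $t<0$, and a \emph{horizontal} bound $H^s_{\mathrm{cdh}}(X,-)=0$ for $s>d$. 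Together these confine the surviving terms to the range $0\le s\le d$, $t\ge 0$, forcing $i=t-s\ge -d$.

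For the horizontal bound I would invoke (or establish) that the cdh cohomological dimension of a noetherian scheme is bounded above by its Krull dimension, so that $H^s_{\mathrm{cdh}}(X,\mathcal F)$ vanishes for every cdh sheaf $\mathcal F$ once $s>d$. This follows the familiar pattern by which Nisnevich, and here cdh, cohomological dimension is controlled by Krull dimension for noetherian schemes of finite dimension; it is the more formal and structural of the two inputs.

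The vertical bound is where the genuine $K$-theoretic content lies, and I expect it to be the main obstacle. To show $\mathcal{A}_t=0$ for $t<0$ it suffices to check vanishing on stalks, i.e.\ on the local rings of the cdh topos. The crucial geometric fact is that these stalks may be taken to be \emph{henselian valuation rings}, and since $KH$ commutes with filtered colimits of rings, the stalk of $\mathcal{A}_t$ at such a point is simply $KH_t(V)$ for the corresponding valuation ring $V$. One is therefore reduced to the connectivity statement that $KH_t(V)=0$ for $t<0$ whenever $V$ is a valuation ring. Although valuation rings are typically neither regular nor noetherian, they are coherent and of a sufficiently controlled homological nature that negative homotopy $K$-theory can be shown to vanish on them; this connectivity on valuation rings, together with the identification of cdh points, is the heart of the argument.

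Finally I would assemble the pieces: with $\mathcal{A}_t=0$ for $t<0$ and $H^s_{\mathrm{cdh}}(X,-)=0$ for $s>d$, the spectral sequence has no terms in total degree $t-s<-d$, yielding $KH_i(X)=0$ for $i<-d$. The two steps demanding the most care are the reduction of the vertical vanishing to valuation rings (controlling the points of the cdh topos) and the connectivity of $KH$ on valuation rings itself; by contrast, the cdh descent spectral sequence and the cohomological-dimension bound play a primarily organizational role.
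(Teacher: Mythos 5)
Your strategy is genuinely different from the paper's, and it is a viable route --- essentially the one along which these vanishing results were later reproved and strengthened --- but as written it outsources the entire difficulty to a statement you do not prove. The organizational pieces are fine: the descent spectral sequence applies because $KH$ satisfies cdh descent (Cisinski), the bound $H^s_{\mathrm{cdh}}(X,-)=0$ for $s>d$ holds for noetherian schemes of finite Krull dimension (Voevodsky's theory of bounded cd-structures, Suslin--Voevodsky), and the reduction of the vertical vanishing to stalks rests on the theorem of Gabber and Kelly that henselian valuation rings give a conservative family of points of the cdh topos. The remaining claim, however --- that $KH_t(V)=0$ for $t<0$ when $V$ is a valuation ring --- is not a routine consequence of coherence or of any ``controlled homological nature''; it is a hard theorem in its own right (due to Kelly and Morrow, who prove that valuation rings are $K$-regular with vanishing negative $K$-theory), established only later and by substantial arguments. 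That single sentence is where all the content of the theorem is hiding, so the proposal as it stands has a genuine gap even though the architecture around it is sound.

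The paper avoids any input about valuation rings or resolution of singularities. It first uses the Zariski descent spectral sequence together with Grothendieck's vanishing theorem to reduce to schemes quasi-projective over a noetherian ring, and then inducts on the dimension $d$. For such $X$ it writes $KH(X)$ as the homotopy colimit of $K(X\times\Delta^\bullet)$ and uses the resulting Bousfield--Kan spectral sequence: a class in $KH_i(X)$ with $i<-d$ lifts to some $\xi\in K_q(X\times\Delta^p)$ with $q<0$, which via Bass's construction is represented by a vector bundle on $X\times\Delta^p\times\G_m^{k}$. The key geometric input is Raynaud--Gruson's platification par \'eclatement: after a projective birational morphism $X'\to X$ the strict transform of a coherent extension of this bundle to $X\times\Delta^p\times\AA^{k}$ becomes flat over $X'$, hence of finite Tor-dimension, so $\xi$ pulls back to zero in negative $K$-theory. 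Cdh descent for the resulting abstract blow-up square, combined with the induction hypothesis on the centre and the exceptional locus, shows $KH_i(X)\to KH_i(X')$ is injective, killing the original class. In short: your approach, once the valuation-ring vanishing is granted, is cleaner and more structural; the paper's approach trades that for an elementary, self-contained argument whose only geometric engine is platification.
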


Weibel conjectured the analogue of this theorem with the
$K$-Theory of Bass--Thomason--Trobaugh in place of homotopy $K$-theory, originally
formulated as a question in~\cite[Qu.~2.9]{Weibel}. In fact Theorem~\ref{mainthm} is a special
case of Weibel's original
conjecture, as can be seen using the spectral sequence~\eqref{spseq}. 

\begin{cor}\label{cor1}
Let $X$ be a noetherian scheme of finite Krull dimension $d$ and let $p$ be a prime nilpotent on
$X$. Then $K_i(X)\otimes_\ZZ \ZZ[1/p]=0$ for $i<-d$.
\end{cor}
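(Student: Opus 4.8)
The plan is to deduce the corollary from Theorem~\ref{mainthm} by comparing $K$-theory with homotopy $K$-theory after inverting $p$, using the spectral sequence~\eqref{spseq}. Writing that spectral sequence as
\[
E^1_{s,t}=N^sK_t(X)\ \Longrightarrow\ KH_{s+t}(X),
\]
where $N^0K_t(X)=K_t(X)$ and the columns $N^sK_t(X)$ for $s\geq 1$ are the iterated homotopy-invariance defects of $K$-theory, I would first reduce the statement to showing that these higher columns become trivial after tensoring with $\ZZ[1/p]$.

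The key input, and the main obstacle, is the claim that $N^sK_t(X)\otimes_\ZZ\ZZ[1/p]=0$ for all $s\geq 1$ whenever $p$ is nilpotent on $X$. I would establish this from the module structure of the groups $N^sK_t(X)$ over the ring $W(\sO_X)$ of big Witt vectors (following Weibel and Stienstra): since $p$ is nilpotent on the quasi-compact noetherian scheme $X$, we have $p^\nu\sO_X=0$ for some $\nu$, so $X$ is a $\ZZ/p^\nu$-scheme, and the Witt-vector action then forces every element of $N^sK_t(X)$ to be annihilated by a power of $p$; equivalently, $K(-;\ZZ[1/p])$ is homotopy invariant on such $X$. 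This $p$-primary torsion statement is the crux of the argument: in characteristic $0$ the defect groups $N^sK_t$ are $\mathbb{Q}$-vector spaces and do not vanish after inverting a single prime, so the proof genuinely uses $p$-nilpotence rather than mere non-regularity of $X$.

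Granting this, I would finish by tensoring the convergent spectral sequence~\eqref{spseq} with the flat $\ZZ$-algebra $\ZZ[1/p]$. Exactness of $-\otimes_\ZZ\ZZ[1/p]$ shows that the resulting $E^1$-page is concentrated in the column $s=0$, with $E^1_{0,t}=K_t(X)\otimes_\ZZ\ZZ[1/p]$ and all other entries zero. Since no differential can enter or leave the single nonzero column, the spectral sequence degenerates and yields a natural isomorphism $K_i(X)\otimes_\ZZ\ZZ[1/p]\cong KH_i(X)\otimes_\ZZ\ZZ[1/p]$ for every $i$. Finally Theorem~\ref{mainthm} gives $KH_i(X)=0$, hence $KH_i(X)\otimes_\ZZ\ZZ[1/p]=0$, for $i<-d$, and the claimed vanishing $K_i(X)\otimes_\ZZ\ZZ[1/p]=0$ follows.
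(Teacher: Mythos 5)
Your proof is correct and follows essentially the same route as the paper: Corollary~\ref{cor1} is deduced from Theorem~\ref{mainthm} via the isomorphism $K_i(X)\otimes_\ZZ\ZZ[1/p]\cong KH_i(X)\otimes_\ZZ\ZZ[1/p]$ for $p$ nilpotent on $X$. The only difference is that the paper simply cites this comparison as a theorem of Weibel \cite[Thm.~9.6]{TT}, whereas you sketch its proof (the Witt-vector module structure forcing the groups $N^sK_t(X)$ for $s\ge 1$ to be $p$-primary torsion, together with degeneration of the spectral sequence after tensoring with the flat ring $\ZZ[1/p]$).
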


Note that under the conditions of Corollary~\ref{cor1}  we have
\[
KH_i(X)\otimes_\ZZ \ZZ[1/p]\cong K_i(X)\otimes_\ZZ \ZZ[1/p] 
\]
according to a result of Weibel, see \cite[Thm.~9.6]{TT}, where $K$ denotes the Bass--Thomason--Trobaugh $K$-theory.

Corollary~\ref{cor1} has been shown in \cite{Kelly} for $X$ quasi-excellent using the alteration
theorem of Gabber--de~Jong. Our proof is more elementary as instead of weak resolution of
singularities we use {\it platification par \'eclatement} \cite[Thm.~5.2.2]{RG}.

\section{Some reductions}

\begin{proposition}\label{proploc}
Let $X$ be a noetherian scheme of finite Krull dimension $d$. If
$KH_i(\sO_{X,x})=0$ for all $x\in X$ and $i<-\dim(\sO_{X,x})$ then $KH_i(X)=0$ for $i<-d$.
\end{proposition}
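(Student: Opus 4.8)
The plan is to descend from the local rings to $X$ by means of the Zariski descent spectral sequence for homotopy $K$-theory, and to control its $E_2$-page through a Grothendieck-type vanishing theorem refined by the dimension of the support.

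First I would recall that $KH$, regarded as a presheaf of spectra on the small Zariski site of $X$, satisfies Zariski descent (as does $K$-theory by \cite{TT}); since $X$ is noetherian of finite Krull dimension $d$, so that the Zariski cohomological dimension is at most $d$ by Grothendieck's vanishing theorem, this yields a convergent descent spectral sequence
\[
E_2^{p,q}=H^p_{\mathrm{Zar}}(X,\underline{KH}_{q})\Longrightarrow KH_{q-p}(X),
\]
where $\underline{KH}_{m}$ denotes the Zariski sheafification of $U\mapsto KH_m(U)$. Because nonconnective $K$-theory, and hence $KH$, commutes with filtered colimits of rings, the stalk of $\underline{KH}_m$ at a point $x$ is
\[
(\underline{KH}_m)_x=\colim_{x\in U}KH_m(U)=KH_m(\sO_{X,x}).
\]

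Now I would feed in the hypothesis. It tells us that $(\underline{KH}_m)_x=0$ unless $\dim(\sO_{X,x})\ge -m$. Using the elementary inequality $\dim\overline{\{x\}}+\dim(\sO_{X,x})\le \dim X=d$, every point $x$ in the support of $\underline{KH}_m$ satisfies $\dim\overline{\{x\}}\le d+m$; a short chain argument (the closure in $X$ of a chain of irreducible closed subsets of the support is again strictly increasing, and the generic point of its top member lies in the support) then shows that the support of $\underline{KH}_m$, as a subspace, has Krull dimension at most $d+m$. The key cohomological input is the refinement of Grothendieck's vanishing theorem asserting that on a noetherian space $H^p(X,\sF)=0$ whenever $p$ exceeds the dimension of the support of $\sF$. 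Applied to $\sF=\underline{KH}_m$ this gives $H^p_{\mathrm{Zar}}(X,\underline{KH}_m)=0$ for $p>d+m$.

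Finally I would read this off the spectral sequence. A term $E_2^{p,q}$ contributing to total degree $n=q-p$ has coefficient sheaf $\underline{KH}_{n+p}$, so by the previous step it vanishes as soon as $p>d+(n+p)$, that is, as soon as $n<-d$, and this condition is independent of $p$. Hence for $n<-d$ the entire $E_2$-page in total degree $n$ vanishes, and convergence forces $KH_n(X)=0$, as claimed. I expect the main obstacle to lie in the two structural inputs rather than in the bookkeeping: verifying that $KH$ satisfies Zariski descent with a convergent descent spectral sequence, so that finiteness of the Krull dimension is genuinely used, and establishing the support-refined Grothendieck vanishing in the precise form needed, in particular that it is the dimension of the \emph{support} itself, and not that of its closure, which bounds the cohomological degree.
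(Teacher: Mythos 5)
Your overall architecture is the same as the paper's: the Zariski descent spectral sequence for $KH$, the identification of the stalks with $KH_m(\sO_{X,x})$, the inequality $\dim\overline{\{x\}}+\dim\sO_{X,x}\le d$, and a support-refined Grothendieck vanishing theorem. The gap is in the vanishing step, exactly where you suspected it. The statement you invoke --- that $H^p(X,\sF)=0$ for $p$ greater than the Krull dimension of the support of $\sF$ \emph{as a subspace} --- does not suffice, and your ``short chain argument'' bounding that subspace dimension is incorrect. The support of a sheaf is not closed under generization, so the generic point of the closure in $X$ of the top member of a chain of irreducible closed subsets of the support need not lie in the support. Concretely, take $X=\AA^2_k$ and let $S$ be the set of closed points: every $x\in S$ has $\dim\overline{\{x\}}=0$, yet $S$ has Krull dimension $2$ as a subspace (for a curve $C$ and a closed point $x\in C$, the chain $\{x\}\subsetneq C\cap S\subsetneq S$ consists of irreducible closed subsets of $S$). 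A sheaf all of whose nonzero stalks sit at closed points of a $d$-dimensional scheme can therefore have support of subspace dimension $d$, so your reduction gives no improvement over plain Grothendieck vanishing --- not enough to kill the relevant $E_2$-terms.

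What is actually needed, and what the paper proves as Lemma~\ref{lem.vani}, is: if $\sF_x=0$ for every $x$ with $\dim\overline{\{x\}}>r$, then $H^p(X,\sF)=0$ for $p>r$. The proof cannot go through the subspace dimension of the support; instead one writes $\sF$ as the filtered colimit of its subsheaves $\sF_\alpha$ generated by finite collections of sections. The support of a single section $s\in\sF(U)$ is \emph{closed} in $U$, hence has finitely many generic points; these lie in the support of $\sF$, so their closures in $X$ have dimension at most $r$. Thus each $\sF_\alpha$ is the pushforward of a sheaf from a closed subscheme $X_\alpha$ with $\dim X_\alpha\le r$, where ordinary Grothendieck vanishing applies, and since cohomology on a noetherian space commutes with filtered colimits one gets $H^p(X,\sF)=0$ for $p>r$. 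With this lemma in place of your support-dimension claim, the rest of your argument (the stalk computation, the codimension inequality, and the degree bookkeeping in the spectral sequence) goes through and matches the paper's proof.
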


In the proof of Proposition~\ref{proploc} we need the following classical result of Grothendieck.

\begin{lem}\label{lem.vani} Let $r\ge 0$ be an integer. 
Let $\mathcal F$ be a Zariski sheaf on the noetherian scheme $X$. Assume
that $\mathcal F_x=0$ for all points $x\in X$  with $\dim \overline{ \{ x \}}>r$. Then
$H^i(X,\mathcal F)=0$ for $i>r$.
\end{lem}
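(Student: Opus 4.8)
The plan is to prove the statement by noetherian induction on the closed subsets of $X$: I assume it holds for every proper closed subset $Z \subsetneq X$ (with its reduced structure, and every Zariski sheaf on $Z$ satisfying the corresponding hypothesis) and deduce it for $X$. The first reduction is to the case that $X$ is irreducible. If it is not, I write $X = Z_1 \cup Z_2$ with $Z_1$ an irreducible component and $Z_2$ the union of the remaining ones, so both are proper closed. Denoting by $i_1, i_2, i_{12}$ the inclusions of $Z_1$, $Z_2$, $Z_1 \cap Z_2$, there is a short exact sequence of sheaves
\[
0 \to \mathcal{F} \to i_{1*}i_1^{-1}\mathcal{F} \oplus i_{2*}i_2^{-1}\mathcal{F} \to i_{12*}i_{12}^{-1}\mathcal{F} \to 0,
\]
which one checks on stalks. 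Since closures and their dimensions agree whether computed in $X$ or in a closed subscheme, the restriction of $\mathcal{F}$ to each of $Z_1$, $Z_2$, $Z_1 \cap Z_2$ satisfies the hypothesis there; and because pushforward along a closed immersion is exact, the cohomology of each of the three sheaves in the sequence is computed on the respective proper closed subset and vanishes for $i > r$ by the inductive hypothesis. The long exact cohomology sequence then yields $H^i(X,\mathcal{F}) = 0$ for $i > r$.

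So I may assume $X$ irreducible with generic point $\eta$ and $\delta := \dim X$. If $\delta \le r$, I would simply invoke the classical Grothendieck vanishing theorem (cohomology of any abelian sheaf on a noetherian space of dimension $\delta$ vanishes above degree $\delta$), which gives $H^i(X,\mathcal{F}) = 0$ for $i > \delta$, hence for $i > r$. The interesting case is $\delta > r$. Here $\dim \overline{\{\eta\}} = \delta > r$, so the hypothesis forces $\mathcal{F}_\eta = 0$.

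To exploit this I would write $\mathcal{F} = \colim_\alpha \mathcal{F}_\alpha$ as the filtered colimit of its subsheaves generated by finitely many local sections. Fixing $\alpha$, the finitely many generators $s_j \in \mathcal{F}(U_j)$ all have vanishing germ at $\eta$ (as $\eta$ lies in every nonempty open and $\mathcal{F}_\eta = 0$), hence each vanishes on a dense open subset; intersecting these, $\mathcal{F}_\alpha$ vanishes on a nonempty open $V_\alpha$ and is therefore supported on the proper closed subset $Z_\alpha := X \setminus V_\alpha$. Consequently $\mathcal{F}_\alpha \cong i_{\alpha*}i_\alpha^{-1}\mathcal{F}_\alpha$ for the inclusion $i_\alpha : Z_\alpha \hookrightarrow X$, so that $H^i(X,\mathcal{F}_\alpha) = H^i(Z_\alpha, i_\alpha^{-1}\mathcal{F}_\alpha)$; the latter vanishes for $i > r$ by the inductive hypothesis, since $i_\alpha^{-1}\mathcal{F}_\alpha$ inherits the hypothesis (its stalks are subgroups of those of $\mathcal{F}$, so they still vanish where required). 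Finally, because $X$ is noetherian, cohomology commutes with filtered colimits, giving $H^i(X,\mathcal{F}) = \colim_\alpha H^i(X,\mathcal{F}_\alpha) = 0$ for $i > r$.

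The main obstacle is exactly the case $\delta > r$: the vanishing $\mathcal{F}_\eta = 0$ does not by itself place the support of $\mathcal{F}$ inside a proper closed subset (the support may well be dense), so one cannot directly descend to a lower-dimensional scheme. The device that resolves this is to pass to subsheaves generated by finitely many sections — for these the support is genuinely contained in a proper closed set — combined with the fact that cohomology commutes with filtered colimits on a noetherian space. A secondary point to treat with care is the bookkeeping that the stalkwise hypothesis is inherited both under restriction to closed subschemes and under passage to subsheaves, as this is what keeps the noetherian induction running.
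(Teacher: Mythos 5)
Your central device --- writing $\mathcal F$ as the filtered colimit of its subsheaves generated by finitely many local sections, locating the support of each such subsheaf in a proper closed subset, and using that cohomology on a noetherian space commutes with filtered colimits --- is sound and is essentially the paper's argument. The paper is more direct at one point: since every point $x$ of the support of a local section has $\mathcal F_x\neq 0$ and hence $\dim\overline{\{x\}}\le r$, the (closed) support of each finitely generated subsheaf has dimension $\le r$, so Grothendieck vanishing finishes immediately and no noetherian induction (and no reduction to the irreducible case) is needed. Your version, which only records that the support is a \emph{proper} closed subset and then inducts, would also work --- except that your reduction to the irreducible case has a genuine gap.

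From your short exact sequence the long exact sequence reads, for $i>r$,
\[
H^{i-1}\bigl(Z_1\cap Z_2, i_{12}^{-1}\mathcal F\bigr) \to H^i(X,\mathcal F)\to H^i\bigl(Z_1,i_1^{-1}\mathcal F\bigr)\oplus H^i\bigl(Z_2,i_2^{-1}\mathcal F\bigr).
\]
The right-hand term vanishes by your inductive hypothesis, but the left-hand term sits in degree $i-1$, and at the critical value $i=r+1$ it is $H^{r}(Z_1\cap Z_2,\cdot)$, about which the inductive hypothesis says nothing. What the sequence actually gives is
\[
H^{r+1}(X,\mathcal F)\cong \operatorname{coker}\Bigl(H^{r}(Z_1,\cdot)\oplus H^{r}(Z_2,\cdot)\to H^{r}(Z_1\cap Z_2,\cdot)\Bigr),
\]
and there is no reason for that restriction map to be surjective; so as written you only obtain vanishing for $i>r+1$. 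Placing $\mathcal F$ on the left of the sequence is what creates this off-by-one. Two standard repairs: (a) use instead $0\to j_!(\mathcal F|_{X\setminus Z_2})\to\mathcal F\to i_{2*}i_2^{-1}\mathcal F\to 0$, whose first term is supported on $Z_1$, so that $H^i(X,\mathcal F)$ is sandwiched between two groups in the \emph{same} degree $i$ that the inductive hypothesis kills (this is how Hartshorne reduces Grothendieck vanishing to the irreducible case); or (b) drop the reduction entirely: your colimit argument runs verbatim on a reducible $X$, since the generators have vanishing germ at every generic point $\eta_j$ with $\dim\overline{\{\eta_j\}}>r$ and hence each finitely generated subsheaf is supported on a proper closed subset, unless all components have dimension $\le r$, in which case Grothendieck vanishing applies outright.
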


\begin{proof}
Let $J=\coprod_{U\subseteq X} \mathcal F(U)$, where $U$ runs through all open subsets of
$X$, and let $I$ be the set of finite subsets of $J$. For $\alpha\in I$ let $\mathcal
F_\alpha$ be the abelian subsheaf of $\mathcal F$ locally generated by the sections in
$\alpha$. Then 
\[
\mathcal F= \colim_{\alpha\in I} \mathcal F_\alpha 
\]
as a filtered colimit. Since each local section of $\sF$ is supported on a closed subscheme
of dimension at most $r$ there are closed immersions $i_\alpha: X_\alpha \to X$ and
abelian sheaves $\sG_\alpha$ on $X_\alpha$ such that $\dim X_\alpha \le r$ and such that
$i_{\alpha,*}(\sG_\alpha)\cong \sF_\alpha$. Then
\[
H^i(X,\sF) \stackrel{\bf (1)}{=}\colim_{\alpha\in I} H^i(X,\sF_\alpha)  = \colim_{\alpha\in I}
H^i(X_\alpha,\sG_\alpha) \stackrel{\bf (2)}{=} 0
\]
for $i>r$.
Here $\bf (1)$ is due to \cite[Prop.~III.2.9]{Hart} and $\bf (2)$ is due to   \cite[Thm.~III.2.7]{Hart}.
\end{proof}

\begin{proof}[Proof of Proposition~\ref{proploc}]

 Consider the convergent Zariski-descent spectral sequence, analogous to \cite[Thm.~10.3]{TT},
\begin{equation}\label{eq.desc}
E_2^{p,q} = H^p(X,\mathcal{KH}_{-q,X}) \Rightarrow KH_{-p-q}(X),
\end{equation}
where $\mathcal{KH}_{i,X}$ is the Zariski sheaf on $X$ associated with $KH_i$.
For $i<-d$ and for $-p-q=i$ let $\sF$ be $\mathcal{KH}_{-q,X}$ and let $r$ be $d-q$.
Then under the conditions of Proposition~\ref{proploc} we get $\sF_x=KH_{-q}(\sO_{X,x})=0$ for all $x\in X$ with $\dim \overline{ \{
  x \}}>r$ since
\[\dim \sO_{X,x}\le \dim X - \dim \overline{ \{ x \}} <\dim X -r =q .\]
So by Lemma~\ref{lem.vani} we deduce $E^{p,q}_2=H^p(X,\sF)=0$ for all $-p-q=i<-d$ and therefore also $KH_i(X)=0$.
\end{proof}

\medskip

The following proposition is immediate in case the scheme $X$ has a desingularization. However, we
avoid any assumption on the existence of resolution of singularities by using
Raynaud--Gruson's platification par \'eclatement instead.

\begin{proposition}\label{prop}
Let $X$ be a reduced scheme which is quasi-projective over a noetherian ring. Let $f\colon
Y\to X$ a smooth and quasi-projective
morphism. Let $k>0$ be an integer and let $\xi\in K_{-k}(Y)$. There exists a birational
projective morphism $p\colon X'\to X$ such that $\tilde p^*(\xi)=0\in K_{-k}(Y')$ where $\tilde p\colon Y' \to Y$ is the pull-back of $p$ along $f$.
\end{proposition}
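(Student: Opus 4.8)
The plan is to translate the vanishing of a negative $K$-class into the extendability of a $K_0$-class across a toric boundary, and then to use platification to produce such an extension after a blow-up of $X$.

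\emph{Reduction to $K_0$.} By the Bass fundamental theorem (see \cite{TT}), applied $k$ times, the group $K_{-k}(Y)$ is a natural direct summand of $K_0(\G_{m,Y}^k)$, where $\G_{m,Y}^k=Y\times_\ZZ\G_m^k$; the projection $q$ onto it is the composite of the boundary maps attached to the localizations $\AA^1\supseteq\G_m$ in each coordinate. Choosing a splitting, lift $\xi$ to a class $\eta\in K_0(\G_{m,Y}^k)$. Since $X$, and hence $\G_{m,Y}^k$, is quasi-projective, $K_0$ is generated by vector bundles, so we may represent $\eta$ by vector bundles; to keep the notation light I treat a single bundle $\mathcal E$ on $\G_{m,Y}^k$, the case of a difference $[\mathcal E_1]-[\mathcal E_2]$ being identical. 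As all maps in sight are natural in $Y$, we have $\tilde p^*\xi=q'(\tilde p^*\eta)$ for the analogous projection $q'$ over $X'$. A class lying in the image of $K_0(\AA^k_{Y'})\to K_0(\G_{m,Y'}^k)$ is annihilated by $q'$, because such a class extends over a coordinate $\AA^1$ and is therefore killed already by the first boundary map. Thus the task reduces to extending the pulled-back class $\tilde p^*[\mathcal E]$ to a perfect complex on $\AA^k_{Y'}=Y'\times_\ZZ\AA^k$.

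\emph{Platification.} Extend $\mathcal E$ to a coherent sheaf $\mathcal M$ on $\AA^k_Y$ along the open immersion $\G_{m,Y}^k\hookrightarrow\AA^k_Y$. The bundle $\mathcal E$ is locally free, hence flat over $X$ since $\G_{m,Y}^k\to X$ is smooth, so by generic flatness there is a dense open $U\subseteq X$ over which $\mathcal M$ is flat. Applying platification par \'eclatement \cite[Thm.~5.2.2]{RG} to $\mathcal M$ on $\AA^k_Y\to X$ relative to $U$, I obtain a $U$-admissible blow-up $p\colon X'\to X$ — projective by construction, and birational because $U$ is dense and $X$ is reduced — such that the strict transform $\mathcal M'$ of $\mathcal M$ on $\AA^k_{Y'}$ is flat over $X'$. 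Because $\mathcal E$ is $X$-flat, its strict transform under the blow-up agrees with its pullback, so $\mathcal M'$ restricts over $\G_{m,Y'}^k$ to $\tilde p^*\mathcal E$; in particular $[\mathcal M'|_{\G_{m,Y'}^k}]=\tilde p^*[\mathcal E]=\tilde p^*\eta$.

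\emph{Conclusion and main obstacle.} Granting that $\mathcal M'$ is perfect on $\AA^k_{Y'}$, its class $[\mathcal M']\in K_0(\AA^k_{Y'})$ provides the desired extension of $\tilde p^*\eta$, whence $\tilde p^*\xi=q'(\tilde p^*\eta)=0$. The crux — and the step that here replaces resolution of singularities — is the implication \emph{flat over $X'$ $\Rightarrow$ perfect on $\AA^k_{Y'}$}, which must hold even though $X'$ is typically singular. I would prove it by passing to local rings and computing projective dimension on the (smooth, hence regular) fibres: if $A\to B$ is smooth and the finite $B$-module $N$ is $A$-flat, then tensoring a $B$-free resolution of $N$ with a free resolution of the residue field $\kappa$ of $A$ shows $\operatorname{Tor}^B_{>0}(N,B\otimes_A\kappa)=0$, so the resolution reduces modulo $\mathfrak m_A$ to a $(B\otimes_A\kappa)$-free resolution of $N\otimes_A\kappa$. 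Since $B\otimes_A\kappa$ is regular, the latter is finite, and minimality forces $\operatorname{pd}_B N=\operatorname{pd}_{B\otimes_A\kappa}(N\otimes_A\kappa)<\infty$. This yields perfectness of $\mathcal M'$ and completes the argument.
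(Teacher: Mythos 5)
Your proposal follows essentially the same route as the paper: reduce to $K_0$ via the Bass construction (classes extending over $Y\times\AA^k$ die), extend the representing bundle to a coherent sheaf on $Y\times\AA^k$, use generic flatness on the reduced base and platification par \'eclatement to make the strict transform flat over a projective birational $X'\to X$, note that the strict transform restricts to the pullback over $\G_m^k$, and deduce perfectness from flatness over $X'$ combined with regularity of the fibres of the smooth morphism. The only difference is in the last step's technology: the paper checks finite Tor-dimension by factoring the derived pullback to a point through the regular fibre, while you run the equivalent minimal-free-resolution argument over the local rings; both are correct.
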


\begin{proof}

By Bass's definition of negative $K$-theory \cite[Sec.~III.4]{KBook} the group $K_{-k}(Y)$ for
$k>0$ is a quotient of
$K_0(Y\times \mathbb G_m^k) $, where $\G_m=\mathbb A^1\setminus \{ 0 \}$. 
Elements of this $K_0$-group coming from $K_0(Y\times \mathbb A^k) $ vanish in
$K_{-k}(Y)$. 

 Without loss
of generality $\xi$ is represented by a vector bundle $V$ on $Y\times \mathbb G_m^k$. We
can extend $V$ to a coherent sheaf $\bar V$ on $Y\times \mathbb A^k $, see
\cite[Sec.~I.9.4]{EGA}. Choose an open dense subscheme $U\subseteq X$ such that $\bar V$
is flat over $U$. This is possible as $X$ is reduced \cite[Thm.~IV.11.1.1]{EGA}.
By {\it platification par \'eclatement} \cite[Thm.~5.2.2]{RG} there is a projective birational morphism $p:X'\to
X$ which is an isomorphism over $U$ and such that the strict transform $\bar V'=p^{\rm st}(\bar
V)$ as a coherent sheaf on $Y'\times \mathbb A^k$ is flat
over $X'$,   here $Y'=X'\times_X Y$.

Recall that the strict transform $p^{\rm st}(\bar
V)$ is defined as the image of $\hat p^{*}(\bar V)\to j_* j^* \hat p^{*}(\bar V)$, where
$j:f^{-1}(U)\times \mathbb A^k \to Y'\times\mathbb A^k $
is the canonical open immersion and where $\hat p$ denotes  the induced morphism $Y'\times
\mathbb A^k \to Y\times \mathbb A^k$.

Note that  $\bar V'|_{Y'\times \G_m^k}$ is isomorphic to the usual pull-back of the
sheaf $V$, as the latter is flat over $X$.

\begin{lem}\label{tordim} $\bar V'$ has finite Tor-dimension as an $\sO_{Y'\times \mathbb
    A^k}$-sheaf.
\end{lem}

Lemma~\ref{tordim} implies by \cite[Prop.~II.8.3.1]{KBook} that $\bar V'$ induces an element of $K_0(Y'\times \mathbb
A^k)$ whose restriction to $Y'\times \G_m^k$
 represents  $\tilde p^*(\xi)\in K_{-k}(Y')$ via the Bass construction explained above. As any
such element in negative $K$-theory vanishes we have proved Proposition~\ref{prop}.
\end{proof}

\begin{proof}[Proof of Lemma \ref{tordim}]
For a noetherian scheme $Z$ we denote by $D(Z)$ the derived category of $\sO_Z$-modules  whose cohomology
sheaves are quasi-coherent
and by $D^b(Z)$ the triangulated subcategory of bounded complexes with coherent cohomology sheaves.
Let $y\in Y'\times  \mathbb A^k$ be a point with image $x\in X'$. Let $i_y:y\to Y'\times
\mathbb A^k$ be the natural map, $i_x:F_x\to Y'\times  \mathbb A^k $ the inclusion of the fiber $F_x$ of $Y'\times
\mathbb A^k\to X'$ over $x$ and let $i^x_y:y\to F_x$ be the canonical morphism. By \cite[Prop.~4.4.11]{WeibelHomological} we have to show that $L i^*_y(\bar
V')\in D(y)$ lies in $ D^b(y)$. As $\bar V'$ is flat over $X'$ we have $L i^*_x (\bar V')=
i^*_x(\bar V')\in D^b(F_x)$. As $F_x$ is a regular scheme, $L  (i^x_y)^*$ maps $D^b(F_x)$ to
$D^b(x)$ \cite[Thm.~4.4.16]{WeibelHomological}, so $L i^*_y(\bar V')= ( L  (i^x_y)^* \circ L i^*_x) (\bar V')$ lies in $D^b(y)$.
\end{proof}

\section{Proof of Theorem~\ref{mainthm}}

In the proof of Theorem~\ref{mainthm} we can, using Proposition~\ref{proploc}, restrict to schemes $X$ which are quasi-projective
over noetherian rings. For such $X$ we 
argue inductively on the dimension $d=\dim(X)$. We may assume that $X$ is reduced as
$KH_i(X)=KH_i(X_{\rm red})$, use \cite[Thm.~2.3]{WeibelKH} and Zariski-descent.
The case $d=0$ of Theorem~\ref{mainthm} is shown in \cite[Prop.~3.1]{WeibelKH}.

Let $d>0$ and assume Theorem \ref{mainthm} for all schemes of Krull dimension  less than
$d$ which are quasi-projective over a noetherian ring. Let $\Delta^\bullet$ be the usual cosimplicial scheme defined in degree $p$ by $\Delta^p=\operatorname{Spec}(\ZZ[T_0,\ldots,T_p]/(\sum T_j-1))$. There is a right half-plane spectral sequence
\begin{equation}\label{spseq}
E^1_{p,q}(X)=K_q(X\times \Delta^p) \Rightarrow KH_{p+q}(X),
\end{equation}
functorial in $X$, see~\cite[Prop.~5.17]{Thomason}. This is the
Bousfield--Kan spectral sequence arising from the simplicial spectrum $K(X\times\Delta^\bullet)$ whose homotopy colimit is $KH(X)$ by definition. For each $p+q$ there is a filtration
\begin{align}\label{filtration}
0=F_{-1}(X)\subseteq F_0(X) \subseteq F_1(X) \subseteq \ldots ~ \cup_{p=0}^\infty F_p(X) = KH_{p+q}(X)
\end{align}
with $F_p(X)/F_{p-1}(X) \cong E^\infty_{p,q}(X)$.

Let $i<-d$. In order to conclude that $KH_i(X)= 0$, we show inductively on $p\geq 0$
that the group $F_p(X)$ in the filtration \eqref{filtration} vanishes for all $X$ as above with
$\dim(X)\le d$ at once. Fix a  scheme $X$ of Krull dimension $d$ which is quasi-projective
over a noetherian ring and let $\gamma\in F_p(X)$ be an element. We have $F_p(X)\cong E^\infty_{p,q}(X)$ by the induction hypothesis on $p$. As $E^\infty_{p,q}(X)$ is a subquotient of $E^1_{p,q}(X)$, the element $\gamma$ lifts to a class $\xi\in K_q(X\times \Delta^p)$. Note that $q<-d<0$.

By Proposition \ref{prop} applied to the morphism $Y=X\times\Delta^p\to X$, we find a
projective birational morphism $p\colon X'\to X$ such that $\tilde p^*(\xi)=0\in K_q(Y')$,
here $Y'=X'\times_X Y$. We choose a nowhere dense closed subscheme $Z\hookrightarrow X$ such that $p$ is an isomorphism outside $Z$ and obtain a cdh-distinguished square
\[
\xymatrix{
Z' \ar[r]\ar[d] &X' \ar[d]^p\\
Z \ar[r]        & X.
}
\]
As homotopy $K$-Theory satisfies cdh-descent by \cite[Thm.~3.9]{Cisinski}, we get a long exact sequence
\[
\cdots\to KH_{i+1}(Z') \to KH_i(X)\to  KH_i(Z)\oplus KH_i(X') \to \cdots .
\]
The groups $KH_{i+1}(Z')$ and $KH_i(Z)$ vanish by the induction hypothesis on $d$ as
$\operatorname{dim}(Z'),\operatorname{dim}(Z)<d$, so $KH_i(X)\to KH_i(X')$ is injective (recall that $i<-d$). Hence, it suffices to show that
$p^*\colon KH_i(X)\to KH_i(X')$ maps the element $\gamma$ to zero. Since
$\operatorname{dim}(X')\leq \operatorname{dim}(X)$, we have $F_p(X')\cong
E^\infty_{p,q}(X')$ by the induction hypothesis. The morphism $p^*\colon KH_i(X)\to
KH_i(X')$ restricts to a morphism $F_p(X)\to F_p(X')$ which is compatible with $\tilde p^*\colon
K_q(Y)\to K_q(Y')$. Since $\tilde p^*(\xi)=0\in K_q(Y') $, we conclude
that $p^*(\gamma)=0$, so $\gamma=0$. Hence we obtain $F_p(X)= 0$.

\end{document}